\newtheorem{thm}{Theorem}[section]
\newtheorem{cor}[thm]{Corollary}
\newtheorem{prp}[thm]{Proposition}
\newtheorem{Remark}[thm]{Remark}
\theoremstyle{definition}
\newcommand{\scr}[1]{\mathscr #1}
\definecolor{wco}{rgb}{0.5,0.2,0.3}
\numberwithin{equation}{section}
\newcommand{\ua}{\uparrow}
\title{{\bf Spectral gaps for the O-U/Stochastic heat processes on path space over a Riemannian manifold with boundary}\footnote{Supported in
 part by  NNSFC (11371099).} }
\author{
{\bf     
Bo Wu }\\
\footnotesize {
 School  of Mathematical Sciences, Fudan
University, Shanghai 200433, China}
\\
\footnotesize{ 
wubo@fudan.edu.cn}}
\date{}
\begin{document}

\maketitle

\def\R{\mathbb R} \def\EE{\mathbb E} \def\Z{\mathbb Z} \def\ff{\frac} \def\ss{\sqrt}
\def\H{\mathbb H}
\def\dd{\delta} \def\DD{\Delta} \def\vv{\varepsilon} \def\rr{\rho}
\def\<{\langle} \def\>{\rangle} \def\GG{\Gamma} \def\gg{\gamma}
\def\ll{\lambda} \def\LL{\Lambda} \def\nn{\nabla} \def\pp{\partial}
\def\d{\text{\rm{d}}} \def\Id{\text{\rm{Id}}}\def\loc{\text{\rm{loc}}} \def\bb{\beta} \def\aa{\alpha} \def\D{\scr D}
\def\E{\scr E} \def\si{\sigma} \def\ess{\text{\rm{ess}}}
\def\beg{\begin} \def\beq{\beg}  \def\F{\scr F}
\def\Ric{\text{\rm{Ric}}}
\def\Var{\text{\rm{Var}}}
\def\Ent{\text{\rm{Ent}}}
\def\Hess{\text{\rm{Hess}}}\def\B{\scr B}
\def\e{\text{\rm{e}}} \def\ua{\underline a} \def\OO{\Omega} \def\b{\mathbf b}
\def\oo{\omega}     \def\tt{\tilde} \def\Ric{\text{\rm{Ric}}}
\def\cut{\text{\rm{cut}}} \def\P{\mathbb P} \def\ifn{I_n(f^{\bigotimes n})}
\def\fff{f(x_1)\dots f(x_n)} \def\ifm{I_m(g^{\bigotimes m})} \def\ee{\varepsilon}
\def\C{\scr C}
\def\M{\scr M}\def\ll{\lambda}
\def\X{\scr X}
\def\T{\scr T}
\def\A{\mathbf A}
\def\LL{\scr L}\def\LLL{\Lambda}
\def\gap{\mathbf{gap}}
\def\div{\text{\rm div}}
\def\dist{\text{\rm dist}}
\def\cut{\text{\rm cut}}
\def\supp{\text{\rm supp}}
\def\Cov{\text{\rm Cov}}
\def\Dom{\text{\rm Dom}}
\def\Cap{\text{\rm Cap}}\def\II{{\mathbb I}}\def\beq{\beg{equation}}
\def\sect{\text{\rm sect}}\def\H{\mathbb H}

\begin{abstract} Fang-Wu\cite{FW17} presented a explicit spectral gap for  the O-U process on path space over a Riemannian manifold without boundary under the bounded Ricci curvature conditions. In this paper, we will extend these results to the case of the Riemannian manifold with boundary. Moreover, we also derive the similar results for the stochastic heat process.
\end{abstract}

\noindent Keywords: Functional inequality; Ricci Curvature; Second fundamental form; Diffusion process; Path
space.\vskip 2cm

\section{Introduction}\label{sect1}
Functional inequality is an important tool to study the spectral gaps for some diffusion operators in the analysis/stochastic analysis field, especially, for the case of infinite dimensional Riemannian path space. For the manifold without boundary,  
Fang\cite{F94} first established the Poincar\'e inequality for the O-U operator on Riemanian path space by the Clark-Ocean formula, after that the log-Sobolev inequality/(weak)Poincar\'e inequality have also been established for the O-U Dirichlet form, see e.g. \cite{AE95,CHL97,EL99,H97,H99, W04,CW14,N,WW18, W16, CT18} and references therein. Recently Naber\cite{N} gave some characterizations of the uniform bounds of Ricci curvature by the analysis of the path space. Motiviated by this work, Fang and Wu\cite{FW17} gave the explicit spectral gap of the O-U operator on path space under the Ricci curvature condition that
$K_2\leq\Ric^Z(:=\Ric +\nabla Z) \leq K_1$ and $K_2+K_1\geq 0$.
This condition $K_2+K_1\geq 0$ is removed by Cheng-Thalimaier\cite{CT18}.

For the manifold with boundary, Wang\cite{W14} proved the damped log-Sobolev inequality for the O-U process on path space, but some geometric informations are hidden in this inequality. 
In this article, our main aim is to present a estimate of the spectral gap for the O-U operator on path space over a manifold (possible with boundary) under  the curvature  and the second fundamental form conditions
\begin{equation}\label{eq1.1}K_2\leq\Ric^Z\leq K_1, \quad \sigma_2\leq \II\leq \sigma_1
\end{equation}
for some constants $K_2,K_1, \sigma_2,\sigma_1\in\R$.
In particular, our results cover Fang-Wu's results and Cheng-Thalmaier's results.  Moreover, we also obtain the estimate of the spectral gap for the stochastic heat process.

To state our main results, we need to introduce some notation. Let $M$ be a $d$-dimensional complete  Riemannian manifold possibly with a boundary $\partial M$ and $N$ be the inward unit normal vector field of $\pp M$. Let $L=\frac{1}{2}\DD+Z$ be the diffusion operator for some $C^1$ vector field $Z$, where $\Delta$ is the Laplace operator on $M$. 

Denote by the Riemannian path space:
$$W_x^T(M)=\{\gamma\in C([0,T]; M):\gamma_0=x\}.$$
Let $\rho$ be the Riemannian distance on M. Then $W_x^T(M)$ is a Polish space under the uniform distance
$$\rho_\infty(\gamma, \sigma) :=\sup_{t\in [0,T]} \rho(\gamma_t, \sigma_t), \quad \gamma,\sigma\in W_x^T(M).$$

Let $O(M)$ be the orthonormal frame bundle over $M$ and $\pi:O(M)\rightarrow M$ be the canonical projection.  
Furthermore, we choose a canonical orthonormal basis
$\{e_i\}_{1\le i\le d}$ on $\R^d$ and a standard orthonormal basis $H_i(u):=(H_{ue_i})_{1\le i\le d}$ for $u\in O(M)$ of horizontal vector fields on $O(M)$. Then the horizontal reflecting diffusion process is
the unique solution to the SDEs:
\begin{equation}\label{eq1.2}\beg{split} \d U_t^x=\, H_i(U_t^x)\circ\d W_t+H_{Z}(U_t^x)\d t+H_N(U_t^x)\d l_t^x,\ \ U_0^x\in O_x(M),\end{split}
\end{equation} 
where $W_t$ is the $d$-dimensional Brwonian motion on a complete filtration probability space $(\OO, \{\F_t\}_{t\ge 0}, \P)$, $H_{Z}$ and $H_{N}$ are the horizontal lift of $Z$ and $N$ respectively,  and $l^x_t$ is an adapted increasing process which increases only when $X_t^x:=\pi U_t^x\in \pp M$ which is  called the local time of $X_t^x$ on $\pp M$.  Then it is easy to know that $X_t^x$ solves the equation 
\begin{equation}\label{eq1.3}\beg{split} \d X_t^x= \, U_t^x\circ d W_t+ Z(X_t^x)\d t + N(X_t)\d l_t^x,\ \ X_0^x=x\end{split}
\end{equation}
up to the life time $\zeta$( the maximal time of the solution). 

Let $\F C^\infty_{T}$ be the space of bounded Lipschitz
continuous cylinder functions on $W_x^T(M)$, i.e. for every $F\in \F C^\infty_{T}$, there exist some $N\geq1$ and $0<t_1<t_2\cdots<t_N\leq T$,
$f \in C_{Lip}(M^N)$ such that $F(\gamma)=f(\gamma_{t_1},\cdots,\gamma_{t_N}), \gamma\in W_x^T(M)$ , where $C_{Lip}(M^N)$ is the collection of bounded Lipschitz continuous functions 
on $M^N$.
Suppose $\H$ is the standard Cameron-Martin space for $C([0, T]; \R^d)$, i.e.
 $$\H=
 \left\{h\in C([0,T];\mathbb{R}^d): h(0)=0,
\|h\|^2_{\mathbb{H_T}}:=\int_0^T|h_s'|^2\d s<\infty\right\}.$$

In order to construct O-U process on path space by the theory of Dirichlet form, we first introduce the damped Mallavin gradient given by Wang\cite{W11}. To  do that,  we will recommend a multiplicative functional $Q_{s,t}^x$, which is first introduced by Hsu \cite{H02} to investigate gradient estimate on $P_t$. 
For any fixed $s\ge 0$, $(Q_{s,t}^x)_{t\ge s}$ is an adapted right-continuous process on $\R^d\otimes \R^d$ such that $Q_{s,t}^x P_{U_t^x}=0$ if $X_t^x\in\pp M$ and
\beq\label{eq1.4} Q_{s,t}^x= \bigg(I-\int_s^t Q_{s,r}^x\big\{\Ric_Z(U_r^x)\d s + \II(U_r^x)\d l_r^x\big\}\bigg)
\Big(I-1_{\{X_t^x\in \pp M\}}P_{U_t^x}\Big),\end{equation} 
where $P_u:\R^d\to \R^d$ is the projection along $u^{-1}N$, i.e.
$$\<P_ua, b\>:=\<ua,N\>\<ub,N\>,\quad  a, b\in\R^d, u\in \cup_{x\in \pp M}O_x(M).$$ 
For every $F\in \scr F C_T^\infty$, by (4.2.1) in \cite{W11}, the damped gradient is defined by
\beq\label{eq1.5} \tt D_tF(X_{[0,T]}^x)= \sum_{i: t_i> t}Q_{t,t_i}^xU_{t_i}^{-1} \nn_if(X_{t_1}^x,\cdots, X_{t_N}^x),\ \
t\in [0,T].\end{equation}
Thus, the associated Mallavin gradient will defined as follows:
\beq\label{eq1.6}  D_tF(X_{[0,T]}^x)= \sum_{i: t_i> t} 
\Big(I-1_{\{X_{t_i}^x\in \pp M\}}P_{U_{t_i}^x}\Big)U_{t_i}^{-1} 
\nn_if(X_{t_1}^x,\cdots, X_{t_N}^x),\ \
t\in [0,T].\end{equation}

For any constants $K_2,K_1, \sigma_2,\sigma_1$ with $K_2\leq K_1, \sigma_2\leq \sigma_1$and each $t\in [0,T]$, let $\mu$ be the random measure on $[0,T]$ given by
$$\aligned\mu_t(\d r)&=\exp\left[-K_2(r-t)-\sigma_2(l_r^x-l_t^x)\right]\{(|K_1|\vee |K_2|)\d r+(|\sigma_1|\vee |\sigma_2|)\d l_r^x\} \}\\&=:\varphi_1(t,r,K_1,K_2,\sigma_1,\sigma_2)\d r+\varphi_1(t,r,K_1,K_2,\sigma_1,\sigma_2)\d l_r^x\endaligned$$
Denote by two measurable functions on $W_x^T(M)$
\beq\label{eq1.7}\aligned
&A_t=\Big(1+\mu([t,T])\Big) + \int_0^t
\Big(1+\mu([r,T])\Big)\varphi_1(r,t,K_1,K_2,\sigma_1,\sigma_2)\d r\\
&B_t= \int_0^t
\Big(1+\mu([r,T])\Big)\varphi_2(r,t,K_1,K_2,\sigma_1,\sigma_2)\d r.
\endaligned\end{equation}

Throughout the article, we assume that the (reflecting if $\pp M$ exists) $L$-diffusion process is non-explosive. Let $\P_x$ be the distribution of the $L$-diffusion process $X^x$ starting from a fixed point $x$ up to some fixed time $T > 0.$ Then $\P_x$ is a probability measure on the Riemannian path space $W_x^T(M).$
Define the following quadratic form by
\beq\label{eq1.8}\aligned \E_{\sigma_1,\sigma_2}^{K_1,K_2}(F,F):=\int_{W_x^T(M)}\int^T_0
|D_tF|^2(A_t\d t+B_t\d l_t)\d \P_x.
\endaligned\end{equation}
The following Logarithmic Sobolev inequality is the main result of this paper.

\beg{thm}\label{T1.1} Assume that $K_2\leq \Ric^Z \leq K_1$ and $\sigma_2\leq \II \leq \sigma_1$. Then the following Logarithmic Sobolev inequality holds  
\begin{equation}\label{eq1.9}
\mathbb{E}\bigg(F^2\log \frac{F^2}{\|F\|^2_{L^2}}\bigg)\leq\E_{\sigma_1,\sigma_2}^{K_1,K_2}(F,F),\quad F\in \F C^\infty_{T}.\end{equation}
\end{thm}

By the above Theorem \ref{T1.1}, we obtain the following Corollary for two special cases.

\begin{cor}\label{C1.2}$(a)$ Assume that $M$ is a Riemannian manifold without a boundary and $K_1\leq \Ric^Z\leq K_2$, then the Logarithmic Sobolev inequality holds
\begin{equation}\label{eq1.10}
\mathbb{E}\bigg(F^2\log \frac{F^2}{\|F\|^2_{L^2}}\bigg)\leq C(T,K_1,K_2)\int_{W_x^T(M)}\int^T_0|D_tF|^2\d t\d \P_x,\quad F\in \F C^\infty_b(M).\end{equation}
In particular, when $K_2<0$, we have
\begin{equation}\label{eq1.11}
Spect(\L)^{-1} \leq \frac{1}{2}+\frac{1}{2}\left(1+\frac{|K_1|\vee|K_2|}{K_2}\Big[1-\e^{-K_2T}\Big]\right)^2.
\end{equation} When $K_2>0$, we have
\begin{equation}\label{eq1.12}\aligned
SG(\L)^{-1}&\leq(1+\beta)^2-2\sqrt{\Big(\beta+\frac{\beta^2}{2}\Big)\Big(\beta+\beta^2-\frac{\beta^2}{2}\e^{-K_2T}\Big)}\ \e^{-\frac{K_2T}{2}},
\endaligned\end{equation}
where $\beta=\frac{|K_1|\vee|K_2|}{K_2}$.

$(b)$ Let $M$ be Ricci flat Riemannian manifold with boundary, and we assume that the second fundamental form satisfies $\sigma_2\leq \II\leq \sigma_1$,  then 

When $\sigma_2\geq0$, we get that for any $F\in \F C^\infty_{T}$
  \begin{equation}\label{eq1.13}
\aligned
\mathbb{E}\bigg(F^2\log \frac{F^2}{\|F\|^2_{L^2}}\bigg)\leq&\int_{W_x^T(M)}(1+\sigma_1l_T^x)\int^T_0|D_tF|^2\d t\d \P_x\\&+\int_{W_x^T(M)}\left(\sigma_1(1+\sigma_1l_T^x)T\right)\int^T_0|D_tF|^2\d l_t\d \P_x.\endaligned\end{equation}
When $\sigma_2<0$, we get that for any $F\in \F C^\infty_{T}$
  \begin{equation}\label{eq1.14}
\aligned
\mathbb{E}\bigg(F^2\log \frac{F^2}{\|F\|^2_{L^2}}\bigg)\leq&\int_{W_x^T(M)}(1+(|\sigma_1|\vee |\sigma_2|)\exp\left[(-\sigma_2+\varepsilon)l_T^x\right])\int^T_0|D_tF|^2\d t\d \P_x\\&+\int_{W_x^T(M)}\left(2(|\sigma_1|\vee |\sigma_2|)^2\exp\left[(-\sigma_2+\varepsilon)l_T^x\right]\right)\int^T_0|D_tF|^2\d l_t\d \P_x\endaligned\end{equation} 
for some constant $\varepsilon>0$.
\end{cor}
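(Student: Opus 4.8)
The plan is to deduce both parts of Corollary~\ref{C1.2} directly from Theorem~\ref{T1.1}, by specializing the two geometric bounds in \eqref{eq1.1} and then simplifying the (in general random) weights $A_t,B_t$ of \eqref{eq1.7}. \emph{Part (a).} Here $\pp M=\emptyset$, so $l_t^x\equiv0$; consequently the term $B_t\,\d l_t$ in \eqref{eq1.8} drops out and the measure $\mu_t(\d r)=(|K_1|\vee|K_2|)\,\e^{-K_2(r-t)}\,\d r$ becomes deterministic, so that $A_t$ is a deterministic function of $t\in[0,T]$ computable by two elementary exponential integrations. With $C(T,K_1,K_2):=\sup_{t\in[0,T]}A_t$, inequality \eqref{eq1.9} reduces to \eqref{eq1.10}; a short one-variable argument shows that for $K_2\neq0$ this supremum is attained at $t=T$ and equals $1+a+\tfrac12a^{2}=\tfrac12+\tfrac12(1+a)^{2}$, where $a=\tfrac{|K_1|\vee|K_2|}{K_2}(1-\e^{-K_2T})$. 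To reach the spectral-gap estimates one passes from this logarithmic Sobolev inequality to the corresponding Poincar\'e inequality in the standard way, applying \eqref{eq1.10} to $1+\vv F$ and letting $\vv\to0$. For $K_2<0$ the bound $\|Q_{s,t}^x\|\le\e^{-K_2(t-s)}$ (valid under $\Ric^Z\ge K_2$) grows in $t$, one cannot improve on $\sup_{t}A_t$, and is left precisely with the value in \eqref{eq1.11}. For $K_2>0$ that bound decays and one does better by keeping the $t$-dependence of $A_t$ and controlling the damped form $\int_0^T|\tt D_tF|^{2}\,\d t$: from \eqref{eq1.4}--\eqref{eq1.5} there is the Volterra identity $\tt D_tF=D_tF-\int_t^T Q_{t,r}^x\,\Ric_Z(U_r^x)\,D_rF\,\d r$, which one inverts through its resolvent kernel (explicit since $\|Q_{t,r}^x\|\le\e^{-K_2(r-t)}$); bounding the cross term that then appears by a Young inequality with a free parameter $\lambda$ and optimizing over $\lambda$ produces the square-root term and yields \eqref{eq1.12}, with $\beta=\tfrac{|K_1|\vee|K_2|}{K_2}$.

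\emph{Part (b).} Here $\Ric^Z\equiv0$, that is $K_1=K_2=0$, so $\varphi_1\equiv0$; hence $A_t=1+\mu_t([t,T])$, $B_t=\int_0^t\bigl(1+\mu_r([r,T])\bigr)\varphi_2(r,t,0,0,\sigma_1,\sigma_2)\,\d r$, and $\mu_t(\d r)=(|\sigma_1|\vee|\sigma_2|)\,\e^{-\sigma_2(l_r^x-l_t^x)}\,\d l_r^x$. If $\sigma_2\ge0$ (so $|\sigma_1|\vee|\sigma_2|=\sigma_1$), the bound $\e^{-\sigma_2(l_r^x-l_t^x)}\le1$ gives $\mu_t([t,T])\le\sigma_1 l_T^x$, whence $A_t\le1+\sigma_1 l_T^x$ and $B_t\le\sigma_1(1+\sigma_1 l_T^x)T$; substituting into \eqref{eq1.9} yields \eqref{eq1.13}. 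If $\sigma_2<0$ the exponential grows and $\mu_t([t,T])=\tfrac{|\sigma_1|\vee|\sigma_2|}{|\sigma_2|}\bigl(\e^{|\sigma_2|(l_T^x-l_t^x)}-1\bigr)$; applying the elementary inequality $a^{-1}(\e^{as}-1)\le s\,\e^{as}\le\vv^{-1}\e^{(a+\vv)s}$, valid for every $\vv>0$, together with the analogous bound for $\varphi_2$, one absorbs all the weights into the expressions appearing in \eqref{eq1.14}.

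The specializations in part (b), and the $K_2<0$ case of part (a), are essentially bookkeeping once the weights are written out. The genuine technical point, which I expect to be the main obstacle, is the $K_2>0$ estimate \eqref{eq1.12} — the sharp inversion of the Volterra relation between $D$ and $\tt D$ and the right choice of the optimization parameter — together with the monotonicity check $\sup_{t\in[0,T]}A_t=A_T$ needed for part (a).
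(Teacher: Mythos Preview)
Your reduction to Theorem~\ref{T1.1} and the specialization of $A_t,B_t$ is exactly the paper's strategy, and your treatment of part~(b) and of the $K_2<0$ case in part~(a) matches the paper essentially line for line.

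The gap is in part~(a) for $K_2>0$. Your assertion that $\sup_{t\in[0,T]}A_t$ is attained at $t=T$ for all $K_2\neq0$ is false when $K_2>0$. Writing $\beta=(|K_1|\vee|K_2|)/K_2$ and $\Lambda(t,T):=A_t$ (this is the paper's \eqref{eq2.8}), one computes
\[
\Lambda'(T,T)=-\beta K_2\bigl[1-\e^{-K_2T}\bigr]-\tfrac{\beta^2K_2}{2}\bigl[1-\e^{-K_2T}\bigr]^{2},
\]
which is strictly negative for $K_2>0$; hence the maximum sits at an interior point $t_0\in(0,T)$, not at $T$. The paper handles this by elementary one-variable calculus (Propositions~\ref{p2.1} and \ref{p2.2}): the equation $\Lambda'(t,T)=0$ reduces to
\[
\e^{2K_2t_0}=\Bigl(1+\tfrac{\beta}{2+\beta}\bigl(1-\e^{-K_2T}\bigr)\Bigr)\e^{K_2T},
\]
and substituting this back into $\Lambda(t_0,T)$ gives an explicit value which, after an AM--GM simplification, is exactly \eqref{eq1.12}.

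Your proposed alternative for $K_2>0$ --- inverting the Volterra relation $\tt D_tF=D_tF-\int_t^T Q_{t,r}^x\Ric^Z(U_r^x)D_rF\,\d r$ through its resolvent, then optimizing a Young parameter --- is unnecessary, and as written it is not detailed enough to see that the specific constant in \eqref{eq1.12} would emerge. The paper never leaves the deterministic weight $A_t$: both \eqref{eq1.11} and \eqref{eq1.12} come from the \emph{same} quantity $\sup_{t}\Lambda(t,T)$, the only difference being where that supremum is located. So the ``genuine technical point'' you flag is not an operator-theoretic inversion but simply locating the interior critical point of $\Lambda(\cdot,T)$ and evaluating $\Lambda$ there.
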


\begin{Remark}
$(1)$ Wang \cite{W11} proved that the damped Logarithmic Sobolev inequality.

$(2)$ When $M$ is a Riemannian manifold without boundary, and $K_2\leq\Ric^Z \leq K_1, K_2+K_1\geq$, Fang-Wu \cite{FW17} first proved \eqref{eq1.10}, later , this result had been extended to the general case of $K_2$ and $K_1$ by Cheng-Thalimaier\cite{CT18}.
\end{Remark}

The rest of this paper is organized as follows: In Section 2, we will prove Theorem \ref{T1.1} and Corollary \ref{C1.2}. The estimate of the spectral gap for the stochastic heat process will be presented in Section 3.

\section{Proofs of Theorem \ref{T1.1} and Corollary \ref{C1.2}}
\subsection{Proof of Theorem \ref{T1.1}}
\beg{proof}[Proof of Theorem \ref{T1.1}]
By Theorem 4.4 in \cite{W11}, we know that the following damped logarithmic Sobolev inequality holds 
\begin{equation}\label{eq2.1}
\mathbb{E}\bigg(F^2\log \frac{F^2}{\|F\|^2_{L^2}}\bigg)\leq 2\int_{W^T_x(M)}\int^T_0|\tilde{D}_tF|^2\d t\d \P_x.
\end{equation}
Therefore it suffices to show that
\begin{equation}\label{eq2.2}
\int^T_0|\tilde{D}_tF|^2\d t\leq\int^T_0
|D_tF|^2(A_t\d t+B_t\d l_t).
\end{equation}
By using the assumptions of $K_2\leq \Ric^Z \leq K_1$ and $\sigma_2\leq \II \leq \sigma_1$, we have
$$\Vert\Ric^Z\Vert\leq |K_1|\vee |K_2|,\quad \Vert \II\Vert\leq |\sigma_1|\vee |\sigma_2|,$$ 
Combining this with \eqref{eq1.14} 
$$Q_{s,t}^x= \bigg(I-\int_s^t Q_{s,r}^x\big\{\Ric^Z(U_r^x)\d s + \II(U_r^x)\d l_r^x\big\}\bigg)
\Big(I-1_{\{X_t^x\in \pp M\}}P_{U_t^x}\Big).$$
and \cite[Theorem 3.2.1]{W14}, it is easy to derive that
\beq\label{eq2.3} \left\|Q_{t,r}^x\right\|\leq \exp\left[-K_2(r-t)-\sigma_2(l_r^x-l_t^x)\right].
\end{equation}   
By the definition of the damped gradient, we get
\beq\label{eq2.4}\aligned
&\tt D_tF(X_{[0,T]}^x)= \sum_{i: t_i> t}Q_{t,t_i}^xU_{t_i}^{-1} \nn_if(X_{t_1}^x,\cdots, X_{t_N}^x)=D_tF(X_{[0,T]}^x)-\\
&\sum_{i: t_i> t}\int_t^{t_i} Q_{t,r}^x\big\{\Ric^Z(U_r^x)\d s + \II(U_r^x)\d l_r^x\big\}
\Big(I-1_{\{X_t^x\in \pp M\}}P_{U_{t_i}^x}\Big)U_{t_i}^{-1} \nn_if(X_{t_1}^x,\cdots, X_{t_N}^x)\\
&=D_tF(X_{[0,T]}^x)-\int_t^T Q_{t,r}^x\big\{\Ric^Z(U_r^x)D_rF(X_{[0,T]}^x)\d r + \II(U_r^x)D_rF(X_{[0,T]}^x)\d l_r^x\big\}.
\endaligned\end{equation}
Then we have
\beq\label{eq2.5}\aligned
&|\tt D_tF|(X_{[0,T]}^x)\leq|D_tF|(X_{[0,T]}^x)\\
&+\int_t^T\exp\left[-K_2(r-t)-\sigma_2(l_r^x-l_t^x)\right]\{(|K_1|\vee |K_2|)\d r+(|\sigma_1|\vee |\sigma_2|)\d l_r^x\} |D_rF|(X_{[0,T]}^x)\}\\&=|D_tF|(X_{[0,T]}^x)+\int_t^T|D_rF|\mu_t(\d r).
\endaligned\end{equation}
The H\"{o}lder's inequality implies that
\beq\label{eq2.6}\aligned
|\tt D_tF|^2(X_{[0,T]}^x)&\leq\Big(1+\mu_t([t,T])\Big)\bigg(|D_tF|^2+\int_t^T
|D_rF|^2\mu_t(\d r)\bigg)
\endaligned\end{equation}
Thus, we obtain
\beq\label{eq2.7}\aligned
&\int^T_0|\tt D_tF|^2\d t\leq\int^T_0\Big(1+\mu([t,T])\Big)|D_tF|^2\d t +\int^T_0 \int_t^T
\Big(1+\mu([t,T])\Big)|D_rF|^2\mu_t(\d r)\d t\\
&=\int^T_0\Big(1+\mu([t,T])\Big)|D_tF|^2\d t +\int^T_0 \int_t^T
\Big(1+\mu([t,T])\Big)|D_rF|^2\varphi_1(t,r,K_1,K_2,\sigma_1,\sigma_2)\d r\d t\\
&+\int^T_0 \int_t^T
\Big(1+\mu([t,T])\Big)|D_rF|^2\varphi_2(t,r,K_1,K_2,\sigma_1,\sigma_2)\d l_r\d t\\
&=\int^T_0\Big(1+\mu([t,T])\Big)|D_tF|^2\d t +\int^T_0 \int_0^r
\Big(1+\mu([t,T])\Big)|D_rF|^2\varphi_1(t,r,K_1,K_2,\sigma_1,\sigma_2)\d r\d t\\
&+\int^T_0 \int_0^r
\Big(1+\mu([t,T])\Big)|D_rF|^2\varphi_2(t,r,K_1,K_2,\sigma_1,\sigma_2)\d l_r\d t\\
&=\int^T_0\Big(1+\mu([t,T])\Big)|D_tF|^2\d t +\int^T_0 \int_0^t
\Big(1+\mu([r,T])\Big)\varphi_1(r,t,K_1,K_2,\sigma_1,\sigma_2)\d r|D_tF|^2\d t\\
&+\int^T_0 \int_0^t
\Big(1+\mu([r,T])\Big)\varphi_2(r,t,K_1,K_2,\sigma_1,\sigma_2)\d r|D_tF|^2\d l_t\\
&=\int^T_0
|D_tF|^2(A_t\d t+B_t\d l_t).
\endaligned\end{equation}
Up to now, we complete the proof.
\end{proof}

\subsection{Proof of Corollary \ref{C1.2}}
To prove Corollary \ref{C1.2}, we need some preparations. 
Let $\beta=\frac{|K_1|\vee |K_2|}{K_2}$ and define 
\begin{equation}\label{eq2.8}
\aligned
&\Lambda(t,T)=1+\beta\Big[1-\exp\left[-K_2(T-t)\right]\Big]+\left(\beta+\beta^2\right)\Big[1-\exp\left(-K_2t\right)\Big]\\&
~~~~~~~+\frac{\beta^2}{2}\left[\exp\left(-K_2(t+T)\right)-
\exp\left(-K_2(T-t)\right)\right]\endaligned\end{equation}
and 
$$C(T,K_1,K_2):=\sup_{t\in[0,T]}\Lambda(t,T).$$
Similar to the proof of Proposition 3.3 in Fang-Wu\cite{FW17} for the case of $K_1+K_2\geq0$, in the following we will discuss monotonicity of the function $\Lambda(\cdot,T)$.

\begin{prp}\label{p2.1}  $(1)$ If $K_2<0$, then $t\rightarrow\Lambda(t,T)$ is strictly increasing over $[0,T]$.
$(2)$ If $K_2\geq0$, then the maximum is attained at a point $t_0$ in $(0, T)$.
\end{prp}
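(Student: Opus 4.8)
The plan is to analyze the function
$$\Lambda(t,T)=1+\beta\bigl[1-\e^{-K_2(T-t)}\bigr]+(\beta+\beta^2)\bigl[1-\e^{-K_2t}\bigr]+\tfrac{\beta^2}{2}\bigl[\e^{-K_2(t+T)}-\e^{-K_2(T-t)}\bigr]$$
by straightforward one-variable calculus in the variable $t$ on $[0,T]$, treating $T$ and $K_2$ (hence $\beta=(|K_1|\vee|K_2|)/K_2$) as fixed parameters. First I would compute $\partial_t\Lambda(t,T)$; each term is exponential in $t$, so the derivative is a linear combination of $\e^{-K_2(T-t)}$, $\e^{-K_2 t}$, and $\e^{-K_2(t+T)}$ with coefficients that are explicit polynomials in $\beta$ and $K_2$. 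Concretely,
$$\partial_t\Lambda(t,T)=-\beta K_2\,\e^{-K_2(T-t)}+(\beta+\beta^2)K_2\,\e^{-K_2 t}-\tfrac{\beta^2}{2}K_2\bigl(\e^{-K_2(t+T)}+\e^{-K_2(T-t)}\bigr),$$
which after factoring $K_2\beta$ (note $\beta>0$ always, since $|K_1|\vee|K_2|\ge|K_2|>0$) reduces the sign analysis to that of an expression of the form $a\,\e^{-K_2 t}-b\,\e^{-K_2(T-t)}-c\,\e^{-K_2(t+T)}$ with $a,b,c>0$ depending on $\beta,T,K_2$.

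For part $(1)$, when $K_2<0$: here I would argue that $-K_2>0$, so that the factor $K_2$ pulled out is negative, and then show the bracketed combination of exponentials is itself negative on $[0,T]$, so that the product $\partial_t\Lambda>0$ throughout. The key monotonicity inputs are that for $K_2<0$ the map $t\mapsto\e^{-K_2(T-t)}$ is decreasing and $t\mapsto\e^{-K_2 t}$ is increasing, which lets me bound the positive term above and the negative terms below by their endpoint values; comparing these at $t=0$ and $t=T$ and using $\beta\ge1$ (which holds because $|K_1|\vee|K_2|\ge|K_2|=-K_2$ forces $\beta=(|K_1|\vee|K_2|)/K_2\le -1<0$ — so in fact $\beta<0$ when $K_2<0$; I will need to track this sign carefully, since it flips several coefficient signs). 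This sign bookkeeping is the delicate point and I expect it to be the main obstacle: one must be consistent about whether $\beta$ is positive or negative in the two regimes, as its sign is tied to the sign of $K_2$.

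For part $(2)$, when $K_2\ge 0$ (more precisely $K_2>0$, where $\beta>0$): I would first handle the endpoints by checking that $\partial_t\Lambda(0,T)>0$ and $\partial_t\Lambda(T,T)<0$. At $t=0$ one gets $\partial_t\Lambda(0,T)=K_2\beta\bigl[(1+\beta)-\e^{-K_2 T}-\tfrac{\beta}{2}\e^{-2K_2T}-\tfrac{\beta}{2}\bigr]$, which is positive because $1+\tfrac{\beta}{2}-\e^{-K_2T}-\tfrac{\beta}{2}\e^{-2K_2T}>0$ (each subtracted term is dominated by a corresponding positive term since $0<\e^{-K_2T}\le1$); at $t=T$ one gets $\partial_t\Lambda(T,T)=K_2\beta\bigl[(1+\beta)\e^{-K_2T}-1-\tfrac{\beta}{2}\e^{-2K_2T}-\tfrac{\beta}{2}\bigr]$, which is negative since $\e^{-K_2T}<1$ makes the leading term too small to compensate. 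By continuity of $\partial_t\Lambda$ and the intermediate value theorem there is an interior critical point $t_0\in(0,T)$, and since $\Lambda(\cdot,T)$ is continuous on a compact interval its maximum is attained; the sign pattern of the derivative at the endpoints shows the maximum is not at $0$ or $T$, hence it is attained at an interior point. If uniqueness of $t_0$ is wanted one can note that $\e^{K_2 t}\partial_t\Lambda$ is, up to the positive factor $K_2\beta$, of the form (increasing function of $t$) minus (decreasing function of $t$) minus (decreasing function of $t$), hence strictly decreasing, so it has at most one zero — but the statement as written only asserts existence of such a $t_0$, so I would keep this remark brief. Throughout, the only genuinely nontrivial step is the careful sign analysis of the three-exponential combination together with the sign of $\beta$; everything else is elementary.
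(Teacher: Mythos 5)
Your overall strategy (compute $\Lambda'(t,T)$, check its sign at the endpoints, and control the interior critical points) is the same as the paper's, and your part $(2)$ is essentially the paper's argument. But there are two concrete problems.

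First, part $(1)$ is not actually proved. You propose to factor out $K_2\beta$ and show that the remaining three-exponential bracket is negative on all of $[0,T]$, but you never carry this out: you first assert $\beta>0$ ``always,'' then correctly observe that $\beta=(|K_1|\vee|K_2|)/K_2\le -1<0$ when $K_2<0$, and end by saying the sign bookkeeping ``is the main obstacle.'' That bookkeeping is the entire content of part $(1)$, and the endpoint-comparison you sketch (bounding each exponential by its values at $t=0$ and $t=T$) does not obviously close it, since the terms move in opposite directions in $t$ and a term-by-term bound need not produce a definite sign at interior points. The clean way to finish --- and what the paper does --- is to use the uniqueness observation you relegate to an optional remark in part $(2)$: the equation $\Lambda'(t,T)=0$ is affine in $\e^{2K_2t}$, hence has at most one root in $[0,T]$; combined with $\Lambda'(0,T)=K_2\beta(1+\beta)(1-\e^{-K_2T})\ge 0$ and $\Lambda'(T,T)=-\beta K_2(1-\e^{-K_2T})-\tfrac{\beta^2K_2}{2}(1-\e^{-K_2T})^2>0$ for $K_2<0$, any interior point with $\Lambda'<0$ would force two roots, a contradiction. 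Note that both endpoint signs require the $\beta\le-1$ bookkeeping (e.g.\ $K_2\beta=|K_1|\vee|K_2|\ge0$ and $(1+\beta)(1-\e^{-K_2T})\ge0$ because both factors are nonpositive when $K_2<0$), so this cannot be waved away.

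Second, your evaluation of $\Lambda'(0,T)$ is miscomputed: at $t=0$ the terms $\e^{-K_2(t+T)}$ and $\e^{-K_2(T-t)}$ both equal $\e^{-K_2T}$, whereas your bracket contains $\e^{-2K_2T}$ and $1$, which are their values at $t=T$. The correct expression collapses to $K_2\beta(1+\beta)(1-\e^{-K_2T})$. For part $(2)$ this slip happens not to change the sign conclusion (both your expression and the correct one are positive for $K_2>0$), so part $(2)$ survives, but the error would propagate badly into part $(1)$ where the sign of every factor matters.
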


\begin{proof}According to the definition \eqref{eq2.8} of $\Lambda(t,T)$, we get
$$ \Lambda(0,T)=1+\beta\Big(1-\e^{-K_2T}\Big)$$
and
$$\aligned\Lambda(T,T)&=1+\left(\beta+\beta^2\right)\Big[1-\exp\left(-K_2T\right)\Big]+\frac{\beta^2}{2}\left[\e^{-2K_2T}-
1\right]\\
&=\frac{1}{2}+\frac{1}{2}\bigg[1+\beta\Big(1-\e^{-K_2T}\Big)\bigg]^2=\frac{1}{2}+\frac{1}{2}\Lambda^2(0,T).
 \endaligned$$
 In particular, the second in the above implies that $\Lambda(T,T)\geq\Lambda(0,T)$.

Next, we take the derivative of $\Lambda(t,T)$  with respect to $t$,
$$\aligned \Lambda'(t,T)
&=-\beta K_2\e^{-K_2(T-t)}+\left(\beta+\beta^2\right)K_2\e^{-K_2t}
-\frac{\beta^2}{2}K_2\left[\e^{-K_2(t+T)}+
\e^{-K_2(T-t)}\right].\endaligned$$
Then we have
\begin{equation}\label{eq2.9}\aligned \Lambda'(0,T)&=-\beta K_2\e^{-K_2T}+\left(\beta+\beta^2\right)K_2
-\beta^2K_2\e^{-K_2T}
\\&
=\beta K_2(1+\beta)(1-\e^{-K_2T})\geq0;
 \endaligned\end{equation}
and
\begin{equation}\label{eq2.10}\aligned \Lambda'(T,T)&=
-\beta K_2+\left(\beta+\beta^2\right)K_2\e^{-K_2T}
-\frac{\beta^2}{2}K_2\left[\e^{-2K_2T}+
1\right]\\&=-\beta K_2\left(1-\left(1+\beta\right)\e^{-K_2T}
+\frac{\beta}{2}\left[\e^{-2K_2T}+
1\right]\right)\\&=-\beta K_2[1-\e^{-K_2T}]-\frac{\beta^2K_2}{2}[1-\e^{-K_2T}]^2.
 \endaligned\end{equation}
Noting that

\begin{equation}\label{eq2.11}
 \left\{ \begin{array}{ll}
\Lambda'(T,T)>0  \qquad   &if ~K_2<0,\\
\Lambda'(T,T)<0 & if ~K_2>0.
\end{array}\right.
\end{equation}

Now we look for $t\in [0,T]$ such that $\Lambda'(t,T)=0$. We have

\begin{equation}\label{eq2.12}\aligned &\Lambda'(t,T)=0 \\
&\Leftrightarrow~~-\e^{-K_2(T-t)}+\left(1+\beta\right)\e^{-K_2t}
-\frac{\beta}{2}\left[\e^{-K_2(t+T)}+
\e^{-K_2(T-t)}\right]=0\\
&\Leftrightarrow~~-\e^{-K_2T}\e^{2K_2t}+\left(1+\beta\right)
-\frac{\beta}{2}\left[\e^{-K_2T}+
\e^{-K_2T}\e^{2K_2t}\right]=0\\&\Leftrightarrow~~\Big(1+\frac{\beta}{2}\Big)\e^{-K_2T}\e^{2K_2t}
=1+\beta-\frac{\beta}{2}\e^{-K_2T}.\endaligned\end{equation}
Therefore there exists at most one $t$ such that  $\Lambda'(t,T)=0$.
For the case where $K_2<0$, if there exists $t_0\in(0,T)$ such that $\Lambda(t_0,T)<0$.
Then by (\ref{eq2.9}) and \eqref{eq2.11}, the equation $\Lambda'(t,T)=0$ has at least two solutions, it is impossible.
Therefore for $K_2<0$, $\Lambda'(t,T)\geq0$. For $K_2>0$, we suppose $t_0$ such that $\Lambda'(t_0,T)=0$, then by (\ref{eq2.12})
$$\e^{2K_2t_0}=\Big(1+\frac{\beta}{2+\beta}\Big(1-\e^{-K_2T}\Big)\Big)\e^{K_2T}.$$
Thus the proof is completed.\end{proof}

By the above Proposition \ref{p2.1}, it is easy to obtain the following Proposition \ref{p2.2}.

\begin{prp}\label{p2.2} $(i)$ If $K_2>0$,
\begin{equation}\label{eq2.13}
\begin{split}
 \sup_{t\in [0,T]}\Lambda(t,T)
&=(1+\beta)^2-\Big(\beta+\frac{\beta^2}{2}\Big)\sqrt{1+\frac{\beta}{2+\beta}\Big(1-\e^{-K_2T}\Big)}\ \e^{-\frac{K_2T}{2}}\\
&-\frac{\Big(\beta+\beta^2-\frac{\beta^2}{2}\e^{-K_2T}\Big)}{\sqrt{1+\frac{\beta}{2+\beta}\Big(1-\e^{-K_2T}\Big)}}
\e^{-\frac{K_2T}{2}}.
\end{split}
\end{equation}
$(ii)$ If $K_2<0$,
\begin{equation}\label{eq2.14}
\sup_{t\in[0,T]}\Lambda(t,T)=\frac{1}{2}+\frac{1}{2}\left(1+\frac{|K_1|\vee |K_2|}{K_2}\Big[1-\e^{-K_2T}\Big]\right)^2.
\end{equation}

\end{prp}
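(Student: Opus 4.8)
The plan is to deduce Proposition \ref{p2.2} directly from the monotonicity information collected in Proposition \ref{p2.1}, treating the two sign regimes of $K_2$ separately. For case $(ii)$, where $K_2<0$, Proposition \ref{p2.1}(1) tells us $t\mapsto\Lambda(t,T)$ is strictly increasing on $[0,T]$, so the supremum is simply $\Lambda(T,T)$; then I would quote the identity $\Lambda(T,T)=\frac12+\frac12\Lambda^2(0,T)$ already established in the proof of Proposition \ref{p2.1}, together with $\Lambda(0,T)=1+\beta(1-\e^{-K_2T})=1+\frac{|K_1|\vee|K_2|}{K_2}[1-\e^{-K_2T}]$, to land exactly on \eqref{eq2.14}. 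This half is essentially bookkeeping.

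For case $(i)$, where $K_2>0$, Proposition \ref{p2.1}(2) guarantees a unique interior maximizer $t_0\in(0,T)$, and from \eqref{eq2.12} it satisfies
$$\e^{2K_2 t_0}=\Big(1+\tfrac{\beta}{2+\beta}\big(1-\e^{-K_2T}\big)\Big)\e^{K_2T},$$
equivalently $\Big(1+\tfrac{\beta}{2}\Big)\e^{-K_2T}\e^{2K_2 t_0}=1+\beta-\tfrac{\beta}{2}\e^{-K_2T}$. The core of the proof is then to substitute $t=t_0$ into the defining expression \eqref{eq2.8} for $\Lambda(t,T)$ and simplify. First I would rewrite \eqref{eq2.8} by grouping the $\e^{-K_2(T-t)}$ terms and the $\e^{-K_2 t}$ term:
$$\Lambda(t,T)=(1+\beta)+(\beta+\beta^2)-\Big(\beta+\tfrac{\beta^2}{2}\Big)\e^{-K_2(T-t)}-(\beta+\beta^2)\e^{-K_2 t}+\tfrac{\beta^2}{2}\e^{-K_2(t+T)}.$$
Note $(1+\beta)+(\beta+\beta^2)=(1+\beta)^2$, which produces the leading term in \eqref{eq2.13}. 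It remains to evaluate $\e^{-K_2(T-t_0)}$ and $\e^{-K_2 t_0}$ (and $\e^{-K_2(t_0+T)}$) in closed form. From the relation for $\e^{2K_2 t_0}$ one gets $\e^{-K_2 t_0}=\big(1+\tfrac{\beta}{2+\beta}(1-\e^{-K_2T})\big)^{-1/2}\e^{-K_2T/2}$ and $\e^{-K_2(T-t_0)}=\e^{K_2 t_0}\e^{-K_2T}=\big(1+\tfrac{\beta}{2+\beta}(1-\e^{-K_2T})\big)^{1/2}\e^{-K_2T/2}$; multiplying these two also gives $\e^{-K_2(t_0+T)}$, which combines with $\tfrac{\beta^2}{2}$ into the $\e^{-K_2(T-t_0)}$ coefficient — this is exactly why the middle coefficient in \eqref{eq2.13} reads $\beta+\tfrac{\beta^2}{2}$ rather than being split further. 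Plugging in the $-(\beta+\beta^2)\e^{-K_2 t_0}$ term reproduces the last line of \eqref{eq2.13}, since $\beta+\beta^2-\tfrac{\beta^2}{2}\e^{-K_2T}$ is what survives after absorbing the $\tfrac{\beta^2}{2}\e^{-K_2(t_0+T)}$ contribution; I would double-check this regrouping once.

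The main obstacle I anticipate is purely algebraic: keeping the bookkeeping of the three exponential terms straight when the $\tfrac{\beta^2}{2}\e^{-K_2(t+T)}$ term is redistributed between the $\e^{-K_2(T-t_0)}$ and $\e^{-K_2 t_0}$ pieces, so that the final answer matches \eqref{eq2.13} coefficient for coefficient. There is no conceptual difficulty — Proposition \ref{p2.1} already does all the analytic work of locating the extremum — but the substitution must be carried out carefully, and it is worth verifying the formula at the endpoint $T\to 0$ (where both sides should collapse to $\Lambda\equiv 1$, i.e.\ the right side of \eqref{eq2.13} becomes $(1+\beta)^2-(\beta+\tfrac{\beta^2}{2})-(\beta+\beta^2-\tfrac{\beta^2}{2})=1$) as a consistency check.
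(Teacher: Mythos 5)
Your proof is correct and is precisely the argument the paper intends (the paper states that Proposition \ref{p2.2} follows from Proposition \ref{p2.1} without writing out the computation): for $K_2<0$ the supremum is $\Lambda(T,T)=\frac12+\frac12\Lambda(0,T)^2$ with $\Lambda(0,T)=1+\beta(1-\e^{-K_2T})$, and for $K_2>0$ one substitutes the critical point determined by \eqref{eq2.12} into the regrouped expression $\Lambda(t,T)=(1+\beta)^2-(\beta+\frac{\beta^2}{2})\e^{-K_2(T-t)}-(\beta+\beta^2-\frac{\beta^2}{2}\e^{-K_2T})\e^{-K_2t}$, which reproduces \eqref{eq2.13} exactly as you describe. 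One minor slip in your prose: the $\frac{\beta^2}{2}\e^{-K_2(t+T)}$ term is absorbed into the coefficient of $\e^{-K_2t}$ (producing the $-\frac{\beta^2}{2}\e^{-K_2T}$ there), not into the $\e^{-K_2(T-t)}$ coefficient, but your displayed regrouped formula and the final result are nonetheless correct.
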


\beg{proof}[Proof of Corollary \ref{C1.2}]
$(a)$ Since $M$ is a Riemannian manifold without boundary, then the local time $l_t=0$, thus 
we have

$$\aligned\mu_t(\d r):&=(|K_1|\vee |K_2|)\exp\left[-K_2(r-t)\right]\d r.\endaligned$$
Then
\beq\label{eq2.15}\aligned
 \mu([t,T])=\frac{(|K_1|\vee |K_2|)}{K_2}\left[1-\exp\left[-K_2(T-t)\right]\right].\endaligned\end{equation}
Which implies that
\beq\label{eq2.16}\aligned
 &\varphi_1(r,t,K_1,K_2,\sigma_1,\sigma_2)=(|K_1|\vee |K_2|)\exp\left[-K_2(t-r)\right]\\
 &\varphi_2(r,t,K_1,K_2,\sigma_1,\sigma_2)=0.\endaligned\end{equation}
Then by \eqref{eq2.15} and the first equality of \eqref{eq2.16},
\beq\label{eq2.17}\aligned
&\int_0^t
\Big(1+\mu([r,T])\Big)\varphi_1(r,t,K_1,K_2,\sigma_1,\sigma_2)\d r\\
&=(|K_1|\vee |K_2|)\int_0^t
\left(1+\frac{(|K_1|\vee |K_2|)}{K_2}\left[1-\exp\left[-K_2(T-r)\right]\right]\right)\exp\left[-K_2(t-r)\right]\d r\\&
=\left(\frac{(|K_1|\vee |K_2|)}{K_2}+\frac{(|K_1|^2\vee |K_2|^2)}{K_2^2}\right)\Big[1-\exp\left(-K_2t\right)\Big]\\&
~~~~~~~-\frac{(|K_1|^2\vee |K_2|^2)}{2K_2^2}\exp\left(-K_2(t+T)\right)
\left(\exp\left(2K_2t\right)-1\right).
\endaligned\end{equation}
Thus we get
\beq\label{eq2.18}\aligned
&B_t= \int_0^t
\Big(1+\mu([r,T])\Big)\varphi_2(r,t,K_1,K_2,\sigma_1,\sigma_2)\d r=0\\
&A_t=\Big(1+\mu([t,T])\Big) + \int_0^t
\Big(1+\mu([r,T])\Big)\varphi_1(r,t,K_1,K_2,\sigma_1,\sigma_2)\d r\\
&=1+\frac{(|K_1|\vee |K_2|)}{K_2}\left[1-\exp\left[-K_2(T-t)\right]\right]
\\&~~~~~~~+\left(\frac{(|K_1|\vee |K_2|)}{K_2}+\frac{(|K_1|^2\vee |K_2|^2)}{K_2^2}\right)\Big[1-\exp\left(-K_2t\right)\Big]\\&
~~~~~~~+\frac{(|K_1|^2\vee |K_2|^2)}{2K_2^2}\left[\exp\left(-K_2(t+T)\right)-
\exp\left(-K_2(T-t)\right)\right].
\endaligned\end{equation}
From which we have
\begin{equation}\label{eq2.19}
\int^T_0
|D_tF|^2(A_t\d t+B_t\d l_t)\leq \int^T_0\Lambda(t,T)
|D_tF|^2\d t.
\end{equation}
Then \eqref{eq1.9}, \eqref{eq1.10} and \eqref{eq1.11} come from Theorem \ref{T1.1} and Proposition \ref{p2.2}.

$(b)$ By the assumption of $M$, we know that
\beq\label{eq2.20}\aligned
 \mu_t(\d r)&=(|\sigma_1|\vee |\sigma_2|)\exp\left[-\sigma_2(l_r^x-l_t^x)\right]\d l_r^x.\endaligned\end{equation}
 Thus,
 \beq\label{eq2.21}\aligned
 \mu([t,T])=(|\sigma_1|\vee |\sigma_2|)\exp\left[\sigma_2l_t^x\right]\int^T_t\exp\left[-\sigma_2 l_r^x\right]\d l_r^x.\endaligned\end{equation}
In  addition, \eqref{eq2.20} implies that 
 \beq\label{eq2.22}\aligned
 &\varphi_1(r,t,K_1,K_2,\sigma_1,\sigma_2)=0\\
 &\varphi_2(r,t,K_1,K_2,\sigma_1,\sigma_2)=(|\sigma_1|\vee |\sigma_2|)\exp\left[-\sigma_2(l_t^x-l_r^x)\right].\endaligned\end{equation}
 Then, by the definition of $A_t$ and $B_t$,
  \beq\label{eq2.23}\aligned
 &A_t=1+(|\sigma_1|\vee |\sigma_2|)\exp\left[\sigma_2l_t^x\right]\int^T_t\exp\left[-\sigma_2 l_r^x\right]\d l_r^x\\
 &B_t=(|\sigma_1|\vee |\sigma_2|)\int^t_0\left(1+(|\sigma_1|\vee |\sigma_2|)\exp\left[\sigma_2l_r^x\right]\int^T_r\exp\left[-\sigma_2 l_u^x\right]\d l_u^x\right)
 \\&~~~~~~~~~~~\times \exp\left[-\sigma_2(l_t^x-l_r^x)\right]\d r.\endaligned\end{equation}
Since $l_t^x$ is a increasing process, we have 
 \begin{equation}\label{eq2.24}
A_t\leq\begin{cases}
1+\sigma_1l_T^x,~~~~~\quad\quad\quad\quad\quad\quad\quad\quad\quad\quad\quad\quad\text{if}\ \sigma_2\geq0,\\
1+(|\sigma_1|\vee |\sigma_2|)\exp\left[(-\sigma_2+\varepsilon)l_T^x\right],\quad\quad\quad\text{if}\ \sigma_2<0
\end{cases}
\end{equation}
and 
 \begin{equation}\label{eq2.25}
B_t\leq\begin{cases}
\sigma_1(1+\sigma_1l_T^x)T,~~~~\quad\quad\quad\quad\quad\quad\quad\quad\quad\text{if}\ \sigma_2\geq0,\\
2(|\sigma_1|\vee |\sigma_2|)^2\exp\left[(-\sigma_2+\varepsilon)l_T^x\right],\quad\quad\quad\text{if}\ \sigma_2<0
\end{cases}
\end{equation}
By Theorem \ref{T1.1}, when $\sigma_2\geq0$, we get
  \begin{equation}\label{eq2.26}
\aligned&
\mathbb{E}\bigg(F^2\log \frac{F^2}{\|F\|^2_{L^2}}\bigg)\\&\leq\int_{W_x^T(M)}(1+\sigma_1l_T^x)\int^T_0|D_tF|^2\d t\d \P_x\\&+\int_{W_x^T(M)}\left(\sigma_1(1+\sigma_1l_T^x)T\right)\int^T_0|D_tF|^2\d l_t\d \P_x,\endaligned\end{equation}
and when $\sigma_2<0$, we get
  \begin{equation}\label{eq2.27}
\aligned&
\mathbb{E}\bigg(F^2\log \frac{F^2}{\|F\|^2_{L^2}}\bigg)\\&\leq\int_{W_x^T(M)}(1+(|\sigma_1|\vee |\sigma_2|)\exp\left[(-\sigma_2+\varepsilon)l_T^x\right])\int^T_0|D_tF|^2\d t\d \P_x\\&+2\int_{W_x^T(M)}\left(|\sigma_1|\vee |\sigma_2|)^2\exp\left[(-\sigma_2+\varepsilon)l_T^x\right]\right)\int^T_0|D_tF|^2\d l_t\d \P_x.\endaligned\end{equation}
\end{proof}

\beg{cor} Let $M:=\left\{x=(x_1,\cdots,x_d)\in \R^d: a_1x_1+\cdots+a_dx_d\geq c\right\}$ for some constant $c\in \R$, then $M$ is  a Riemannian manifold without boundary and $\Ric=0$ with $ \II=0$, thus
we have
\begin{equation}\label{eq2.28}
\aligned
\mathbb{E}\bigg(F^2\log \frac{F^2}{\|F\|^2_{L^2}}\bigg)\leq&\int_{W_x^T(M)}\int^T_0|D_tF|^2\d t\d \P_x.\endaligned\end{equation}
\end{cor}

\section{Stochastic heat equation}
In this section, we will consider the spectral gap for the stochastic heat equation on a Riemannian manifold with boundary. Before moving on, let's introduce some notation.

The stochastic heat equation on Riemannian manifold had been studied detailed by \cite{RWZZ17}(see also \cite{H16}). Here they introduced some notation. In particular, the classical cylinder function depending on finite times is not in the domain of generator associated to the stochastic heat equation. Thus, we need to introduce a class of new cylinder function $\F C_b^1$ on $W_x^T(M),$ i.e. for every $F\in \F C_b^1$, there exist some $m\geq1, ~m\in \mathbb{N},~ f\in C_b^1(\mathbb{R}^m), g_i\in C_b^{0,1}([0,1]\times M)$, $i=1,...,m$,
such that
\begin{equation}\label{eq3.1}\aligned
F(\gamma)=f\left(\int_0^1  g_1(s,\gamma_s) \d s,\int_0^1  g_2(s,\gamma_s) \d s,...,\int_0^1  g_m(s,\gamma_s) \d s\right),\quad \gamma\in W_x^T(M),\endaligned\end{equation}
where $C_b^{0,1}([0,1]\times M)$ denotes the functions which are continuous w.r.t. the first variable and differentiable w.r.t. the second variable with continuous derivatives.

For any $F\in \F C_b^1$ with \eqref{eq2.2} form and $h\in L^2([0,1];\mathbb{R}^d)$, according to Wang\cite{W11}, the damped Malliavin gradient  of $F$ is given by
$$\dot{\tt D}F(s)(\gamma):=\sum_{j=1}^m\hat{\partial}_jf(\gamma)\int_s^TQ_{s,u}U_u^{-1}(\gamma)\nabla g_j(u,\gamma_u)\d u,\quad \gamma\in W_x^T(M).$$

Let $\tt \nabla F$ be the damped $L^2$-gradient of $F$, and since 
$$\aligned &\int_0^T\left\langle\dot{\tt D}F(s),h'_s\right\rangle \d s=\langle h,\mathbf{D}F\rangle_{\H}=D_hF=\langle h,\mathbf{D}F\rangle\>_{L^2}\\
&=\int_0^T\left\langle \tt \nabla F(s),h_s\right\rangle \d s=\int_0^T\left\langle \tt \nabla F(s),\int^s_0h'_u\d u\right\rangle\d s\\
&=\int_0^T\int^s_0\left\langle \tt \nabla F(s),h'_u\right\rangle\d u\d s 
=\int_0^T\int^T_u\left\langle \tt \nabla F(s),h'_u\right\rangle\d s\d u \\
&= \int_0^T\left\langle\int^T_s \tt \nabla F(u)\d u,h'_s\right\rangle\d s.\endaligned$$
Then, we have
$$\int^T_s \tt \nabla F(u)\d u=\dot{\tt D}F(s).$$
Thus,
$$\tt \nabla F(s)=\sum_{j=1}^m\hat{\partial}_jf(\gamma)Q_{s,T}U_s^{-1}(\gamma)\nabla g_j(s,\gamma_s).$$
The $L^2$-gradient of $F$ is defined by
$$\nabla F(s)=\sum_{j=1}^m\hat{\partial}_jf(\gamma)U_s^{-1}(\gamma)\nabla g_j(s,\gamma_s).$$
By Lemma 4.3.2 in \cite{W11}, we have
\begin{equation}\label{eq3.2}\aligned 
F&=F+\sqrt{2}\int^T_0\langle\dot{\tt D}F(s), \d B_s\rangle\\
&=F+\sqrt{2}\int^T_0\left\langle\int^T_s \tt \nabla F(u)\d u, \d B_s\right\rangle\\
&=F+\sqrt{2}\int^T_0\left\langle\int^T_sQ_{s,u} \nabla F(u)\d u, \d B_s\right\rangle
\endaligned\end{equation}
By the standard the procedure, we have
\begin{equation}\label{eq3.3}
\mathbb{E}\bigg(F^2\log \frac{F^2}{\|F\|^2_{L^2}}\bigg)\leq 2\int_{W^T_x(M)}\int^T_0\left|\int^T_s Q_{s,u} \nabla F(u)\d u\right|^2\d s\d \P_x.
\end{equation}
By \eqref{eq2.3} and H\"{o}lder's inequality, we get
\beq\label{eq3.4}\aligned\left|\int^T_t Q_{s,u} \nabla F(u)\d u\right|^2&\leq\left|\int^T_s \e^{-K(u-s)-\sigma(l_u^x-l_s^x)} \nabla F(u)\d u\right|^2\\&
\leq\int^T_s \e^{-K(u-s)-\sigma(l_u^x-l_s^x)}\d u\int^T_s \e^{-K(u-s)-\sigma(l_u^x-l_s^x)} |\nabla F|^2(u)\d u
\\&
=\int^T_s \e^{-K(u-s)-\sigma(l_u^x-l_s^x)} \d u\int^T_s \e^{-K(u-s)-\sigma(l_u^x-l_s^x)} |\nabla F|^2(u)\d u.
\endaligned\end{equation}  
Let 
$$\varphi(s)=\int^T_s \e^{-K(u-s)-\sigma(l_u^x-l_s^x)} \d u.$$
Then by changing the order of integration we obtain
$$\aligned 
&\int_{W^T_x(M)}\int^T_0\left|\int^T_s Q_{s,u} \nabla F(u)\d u\right|^2\d s\d \P_x\leq \int_{W^T_x(M)}\int^T_0\varphi(s)\int^T_s \e^{-K(u-s)-\sigma(l_u^x-l_s^x)} |\nabla F|^2(u)\d u\d s\d \P_x
\\
&=\int_{W^T_x(M)}\int^T_0A(s) |\nabla F|^2(s)\d s\d \P_x,\endaligned$$
where
$$\aligned A(s)=\int^s_0\varphi(u) \e^{-K(s-u)-\sigma(l_s^x-l_u^x)}\d u.\endaligned$$
Thus, we get the following Logarithmic Sobolev inequality.

\beg{thm}\label{T3.1} Assume that $\Ric^Z\geq K$ and $\II\geq \sigma$. Then the following Logarithmic Sobolev inequality holds  
\begin{equation}\label{eq3.5}
\mathbb{E}\bigg(F^2\log \frac{F^2}{\|F\|^2_{L^2}}\bigg)\leq2\int_{W^T_x(M)}\int^T_0A(s) |\nabla F|^2(s)\d s\d \P_x,\quad F\in \F C^1_b.\end{equation}
\end{thm}

\begin{cor}\label{c3.2}$(a)$ Assume that $M$ is a Riemannian manifold with a convex boundary and $K\leq \Ric^Z$, then the Logarithmic Sobolev inequality holds
\begin{equation}\label{eq3.6}
\mathbb{E}\bigg(F^2\log \frac{F^2}{\|F\|^2_{L^2}}\bigg)\leq C(T,K)\int_{W_x^T(M)}\int^T_0|\nabla F|^2\d t\d \P_x,\quad F\in \F C^\infty_b(M),\end{equation}
where
 \begin{equation*}\label{e0a}
C(T,K)=\begin{cases}
\frac{1}{2K^2}\bigg[2-\frac{2-e^{-KT}}{\sqrt{2\e^{KT}-1}}-e^{-KT}\sqrt{2\e^{KT}-1}\bigg],~~\quad\quad\quad\text{if}\ K\geq0,\\
\frac{1}{K^2}\big[1-e^{-KT}\big]^2,~~~~\quad\quad\quad\quad\quad\quad\quad\quad\quad\quad\quad\quad\text{if}\ K<0.
\end{cases}
\end{equation*}

\end{cor}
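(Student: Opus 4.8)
The plan is to specialize Theorem~\ref{T3.1} to $\sigma=0$. Saying that $\pp M$ is convex is exactly saying that the second fundamental form is nonnegative, $\II\ge0$, so Theorem~\ref{T3.1} applies with the given lower Ricci bound $K$ and with $\sigma=0$; and with this choice every local‑time term disappears. Indeed, \eqref{eq2.3} then reads $\|Q_{s,u}^x\|\le\e^{-K(u-s)}$, so the weights become \emph{deterministic}: $\varphi(s)=\int_s^T\e^{-K(u-s)}\,\d u$ and $A(s)=\int_0^s\varphi(u)\,\e^{-K(s-u)}\,\d u$ no longer depend on the path. Being deterministic, $A$ can be pulled out of the integral in \eqref{eq3.5}, which reduces the claim to computing $\sup_{s\in[0,T]}A(s)$ and matching it (times the constant coming from Theorem~\ref{T3.1}) with the asserted $C(T,K)$; passing from the cylinder class to the full Dirichlet space, if one wishes, is a routine density step.

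First I would compute $A$ explicitly. For $K\neq0$ one has $\varphi(s)=\frac1K\big(1-\e^{-K(T-s)}\big)$, and an elementary integration gives
$$A(s)=\frac1{K^{2}}\Big[(1-\e^{-Ks})-\tfrac12\e^{-KT}\big(\e^{Ks}-\e^{-Ks}\big)\Big],\qquad s\in[0,T],$$
the value at $K=0$ being recovered by continuity in $K$, so it suffices to treat $K\neq0$. Differentiating, $A'(s)=\frac1K\big[\e^{-Ks}-\tfrac12\e^{-KT}(\e^{Ks}+\e^{-Ks})\big]$, so that $A'(s)=0$ is equivalent to the single equation $\e^{2Ks}=2\e^{KT}-1$, of the same type as the one met in Proposition~\ref{p2.1}.

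The remaining work, and the only delicate point, is the case analysis producing the two branches of $C(T,K)$. If $K>0$ then $1<2\e^{KT}-1<\e^{2KT}$, the right inequality being $(\e^{KT}-1)^2>0$, so the solution $s_0$ of $\e^{2Ks}=2\e^{KT}-1$ lies in $(0,T)$; since $A'(0)=\frac1K(1-\e^{-KT})>0$ and $A'$ has at most one zero, $s_0$ is the global maximum on $[0,T]$, and inserting $\e^{Ks_0}=\sqrt{2\e^{KT}-1}$ into $A$ (simplifying by means of $2-\e^{-KT}=\e^{-KT}(2\e^{KT}-1)$) gives the $K\ge0$ value, with $K=0$ obtained by a limit. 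If $K<0$ then either $2\e^{KT}-1\le0$, so $A'$ has no zero, or $2\e^{KT}-1>0$, in which case $2\e^{KT}-1\le\e^{2KT}$ forces the root of $\e^{2Ks}=2\e^{KT}-1$ to lie at $s\ge T$; in either sub‑case $A'>0$ on $[0,T)$ (again because $A'(0)>0$), hence $\sup_{[0,T]}A=A(T)=\frac1{2K^{2}}(1-\e^{-KT})^{2}$, which gives the $K<0$ value. This establishes \eqref{eq3.6}. I expect the monotonicity/sign bookkeeping for $A'$ in this last step to be the main obstacle, though it is considerably lighter than the argument of Proposition~\ref{p2.1} since here there is no local time to carry along.
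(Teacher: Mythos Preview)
Your proposal is correct and follows essentially the same route as the paper: both exploit convexity ($\II\ge0$, so one may take $\sigma=0$, or equivalently bound $\e^{-\sigma(l_u-l_s)}\le1$) to reduce Theorem~\ref{T3.1} to the deterministic weight $A(s)=\frac{1}{2K^{2}}\big[2-2\e^{-Ks}+\e^{-K(T+s)}-\e^{-K(T-s)}\big]$, and then maximize $A$ via the critical equation $\e^{2Ks}=2\e^{KT}-1$ with a case split on the sign of $K$. Your handling of the $K<0$ case---locating the root explicitly at $s\ge T$ rather than arguing by contradiction as the paper does---is marginally more direct, but the substance is identical.
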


\begin{proof} Since the boundary is convex, thus $\II\geq\sigma\geq0$. Thus

$$\varphi(s)=\int^T_s \e^{-K(u-s)-\sigma(l_u^x-l_s^x)} \d u\leq
\int^T_s \e^{-K(u-s)} \d u=\frac{1}{K} \left[1-\e^{-K(T-s)}\right]$$
and
$$\aligned A(s)&=\int^s_0\varphi(u) \e^{-K(s-u)-\sigma(l_s^x-l_u^x)}\d u\leq \frac{1}{K}\int^s_0 \left[1-\e^{-K(T-u)}\right]\e^{-K(s-u)}\d u\\
&=\frac{1}{2K^2}\bigg[2-2e^{-Ks}+e^{-K(T+s)}-e^{-K(T-s)}\bigg].\endaligned$$
Then we get
$$A(0)=0,\quad A(T)=\frac{1}{2K^2}\big(1-e^{-K}\big)^2$$
In the following, similar to the argument of Proposition \ref{p2.1}.  Taking the derivative of $s\rightarrow A(s)$ gives
$$A'(s)=\frac{1}{2K}\bigg[2e^{-Ks}-e^{-K(T+s)}-e^{-K(T-s)}\bigg].$$
Thus,\begin{equation}\label{eq3.7}A'(0)=\frac{1}{K}\big[1-e^{-K}\big]\geq0, \quad A'(T)=-\frac{1}{2K}\big[e^{-K}-1\big]^2.\end{equation}
Noting that

\begin{equation}\label{eq3.8}
 \left\{ \begin{array}{ll}
A'(T)>0  \qquad   &if ~K<0,\\
A'(T)<0 & if ~K>0.
\end{array}\right.
\end{equation}

Now we look for $s\in [0,T]$ such that $A'(s)=0$. We have

\begin{equation}\label{c3.1}\aligned &A'(s)=0 \\
&\Leftrightarrow~~2e^{-Ks}-e^{-K(T+s)}-e^{-K(T-s)}=0\\
&\Leftrightarrow~~2-e^{-KT}-e^{-KT+2Ks}=0\\&\Leftrightarrow~~e^{2Ks}
=2\e^{KT}-1.\endaligned\end{equation}
Therefore there exists at most one $t$ such that  $A'(s)=0$.
For the case where $K<0$, if there exists $s_0\in(0,T)$ such that $A(s_0)<0$.
Then by \eqref{eq3.7}  and \eqref{eq3.8}, the equation $A'(s)=0$ has at least two solutions, it is impossible.
Therefore for $K<0$, $A'(s)\geq0$. For $K>0$, we suppose $s_0$ such that $\Lambda'(t_0,T)=0$, then by (\ref{c3.1})
$$e^{2Ks_0}
=2\e^{KT}-1.$$
The proof is completed. 

\end{proof}

\beg{cor} Let $M$ be a Ricci-flat Riemannian manifold with a convex boundary, then
we have
\begin{equation*}
\aligned
\mathbb{E}\bigg(F^2\log \frac{F^2}{\|F\|^2_{L^2}}\bigg)\leq&T^2\int_{W_x^T(M)}\int^T_0|\nabla F|^2\d t\d \P_x.\endaligned\end{equation*}
\end{cor}

\beg{thebibliography}{99}

\leftskip=-2mm
\parskip=-1mm


\bibitem{AE95} S. Aida and K. D. Elworthy, \emph{Differential calculus on path and loop spaces. I. Logarithmic
Sobolev inequalities on path spaces,} C. R. Acad. Sci. Paris S\'erie I, 321(1995), 97--102.

\bibitem{BLW07}  D. Bakry, M. Ledoux, F.-Y. Wang, \emph{Perturbations of functional inequalities using growth conditions,}
J. Math. Pures Appl. 87(2007), 394--407.



\bibitem{CHL97} B. Capitaine,  E. P. Hsu and M. Ledoux, \emph{Martingale representation and a simple proof of logarithmic
Sobolev inequalities on path spaces,} Electron. Comm. Probab. 2(1997),
71--81.



\bibitem{CW14} X. Chen, B. Wu, \emph{Functional inequality on path space
over a non-compact Riemannian manifold,}  J.
Funct. Anal. 266(2014), 6753-6779.

\bibitem{CT18} L. J. Cheng, A. Thalmaier, \emph{Spectral gap on Riemannian path space over static and evolving manifolds,} J. Funct. Anal. 274(2018) 959¡V984

\bibitem{CM96} A. B. Cruzeiro and P. Malliavin, \emph{Renormalized differential geometry
and path space: structural equation, curvature,}  J. Funct. Anal. 139: 1 (1996), 119--181.

\bibitem{D92} B. K. Driver, \emph{A Cameron-Martin type quasi-invariance theorem for Brownian
motion on a compact Riemannian manifolds,} J. Funct. Anal.
110(1992), 273--376.

\bibitem{DR92} B. K. Driver and M. R\"{o}ckner, \emph{Construction of diffusions on path and
loop spaces of compact Riemannian manifolds,} C. R. Acad. Sci. Paris
S\'eries I 315(1992), 603--608.

\bibitem{EL99} K. D. Elworthy, X.- M. Li and Y. Lejan, \emph{On The geometry of diffusion
operators and stochastic flows,} Lecture Notes in Mathematics, 1720(1999), Springer-Verlag.

\bibitem{EM97} K. D. Elworthy and Z.-M. Ma, \emph{Vector fields on mapping spaces and
related Dirichlet forms and diffusions,} Osaka. J. Math. 34(1997), 629--651.

\bibitem{ES95} O. Enchev and D. W. Stroock, \emph{Towards a Riemannian geometry on the path space over a Riemannian manifold,}
 J. Funct. Anal. 134 : 2 (1995), 392--416.

\bibitem{F94} S.- Z. Fang, \emph{Un in\'equalit\'e du type Poincar\'esur un espace de chemins,}
C. R. Acad. Sci. Paris S\'erie I 318(1994), 257--260.

\bibitem{FM93} S.- Z. Fang and P. Malliavin, \emph{Stochastic analysis on the path
space of a Riemannian manifold: I. Markovian stochastic calculus,} J. Funct. Anal. 118 : 1 (1993), 249--274.

\bibitem{FWW08} S.- Z. Fang, F.-Y. Wang and B. Wu, \emph{Transportation-cost
inequality on path spaces with uniform distance,} Stochastic. Process. Appl. 118: 12 (2008), 2181--2197.

\bibitem{FW17} S. Z. Fang,  B. Wu, \emph{Remarks on spectral gaps on
the Riemannian path  space,}  Electron. Commun. Probab. 22 (2017), no. 19, 1¡V13.

\bibitem{FOT10} M. Fukushima,  Y. Oshima and M. Takeda,
\emph{Dirichlet Forms and Symmetric Markov Processes,} Walter de
Gruyter, 2010.

\bibitem{H16}M. Hairer. \emph{The motion of a random string,} arXiv:1605.02192, pages 1-20, 2016.


\bibitem{H97} E. P. Hsu, \emph{Logarithmic Sobolev inequalites on path spaces over compact Riemannian manifolds,}
Commun. Math. Phys. 189(1997), 9-16.

\bibitem{H99} E. P. Hsu, \emph{Analysis on path and loop spaces,} in "Probability Theory and Applications"
(E. P. Hsu and S. R. S. Varadhan, Eds.), LAS/PARK CITY Mathematics Series, 6(1999), 279--347,
Amer. Math. Soc. Providence.


\bibitem{H02} E. P. Hsu, \emph{Multiplicative functional for the heat equation on manifolds with boundary,} Mich. Math. J. 50(2002),351--367.








\bibitem{N} A. Naber, Characterizations of bounded Ricci curvature on smooth and nonsmooth spaces, {\it arXiv: 1306.6512v4}.

\bibitem{RWZZ17} M. R\"{o}ckner, B. Wu, R.C. Zhu and R.X. Zhu,\emph{Stochastic Heat Equations with Values in a
Manifold via Dirichlet Forms}, submitted.

\bibitem{W04} F.- Y. Wang, \emph{Weak poincar\'{e} Inequalities on path
spaces,} Int. Math. Res. Not. 2004(2004), 90--108.

\bibitem{W09} F.-Y. Wang, \emph{Second fundamental form and gradient of Neumann semigroups,}  J. Funct. Anal. 256(2009), 3461--3469.

\bibitem{W11}  F.-Y. Wang, \emph{Analysis on path spaces over Riemannian manifolds with boundary,} Comm. Math. Sci. 9(2011),1203--1212.

\bibitem{W14} F.- Y. Wang, \emph{Analysis for diffusion processes on Riemannian manifolds,} World Scientific, 2014.

\bibitem{WW18} F.- Y. Wang and B. Wu, \emph{Pointwise Characterizations of Curvature and Second Fundamental Form on
Riemannian Manifolds,} accepted by SC.

\bibitem{W16} B. Wu, \emph{Characterizations of the upper bound of Bakry-Emery curvature,}  {\it arXiv: 1612.03714v1}

\end{thebibliography}
\end{document}